\documentclass[12pt]{amsart}
\usepackage{amscd,amssymb,amsmath,multicol,scalefnt,etex,tikz}
\newtheorem{thm}[equation]{Theorem}
\numberwithin{equation}{section}
\newtheorem{cor}[equation]{Corollary}

\newtheorem{lem}[equation]{Lemma}

\newtheorem{prop}[equation]{Proposition}

\begin{document}
\raggedbottom \voffset=-.7truein \hoffset=0truein \vsize=8truein
\hsize=6truein \textheight=8truein \textwidth=6truein
\baselineskip=18truept

\def\mapright#1{\ \smash{\mathop{\longrightarrow}\limits^{#1}}\ }
\def\mapleft#1{\smash{\mathop{\longleftarrow}\limits^{#1}}}
\def\mapup#1{\Big\uparrow\rlap{$\vcenter {\hbox {$#1$}}$}}
\def\mapdown#1{\Big\downarrow\rlap{$\vcenter {\hbox {$\ssize{#1}$}}$}}
\def\mapne#1{\nearrow\rlap{$\vcenter {\hbox {$#1$}}$}}
\def\mapse#1{\searrow\rlap{$\vcenter {\hbox {$\ssize{#1}$}}$}}
\def\mapr#1{\smash{\mathop{\rightarrow}\limits^{#1}}}
\def\ss{\smallskip}
\def\ar{\arrow}
\def\vp{v_1^{-1}\pi}
\def\at{{\widetilde\alpha}}
\def\sm{\wedge}
\def\la{\langle}
\def\ra{\rangle}
\def\on{\operatorname}
\def\ol#1{\overline{#1}{}}
\def\spin{\on{Spin}}
\def\cat{\on{cat}}
\def\lbar{\ell}
\def\qed{\quad\rule{8pt}{8pt}\bigskip}
\def\ssize{\scriptstyle}
\def\a{\alpha}
\def\bz{{\Bbb Z}}
\def\Rhat{\hat{R}}
\def\im{\on{im}}
\def\ct{\widetilde{C}}
\def\ext{\on{Ext}}
\def\sq{\on{Sq}}
\def\eps{\epsilon}
\def\ar#1{\stackrel {#1}{\rightarrow}}
\def\br{{\bold R}}
\def\bC{{\bold C}}
\def\bA{{\bold A}}
\def\bB{{\bold B}}
\def\bD{{\bold D}}
\def\bh{{\bold H}}
\def\bQ{{\bold Q}}
\def\bP{{\bold P}}
\def\bx{{\bold x}}
\def\bo{{\bold{bo}}}
\def\si{\sigma}
\def\Vbar{{\overline V}}
\def\dbar{{\overline d}}
\def\wbar{{\overline w}}
\def\Sum{\sum}
\def\tfrac{\textstyle\frac}
\def\tb{\textstyle\binom}
\def\Si{\Sigma}
\def\w{\wedge}
\def\equ{\begin{equation}}
\def\AF{\operatorname{AF}}
\def\b{\beta}
\def\G{\Gamma}
\def\D{\Delta}
\def\L{\Lambda}
\def\g{\gamma}
\def\k{\kappa}
\def\psit{\widetilde{\Psi}}
\def\tht{\widetilde{\Theta}}
\def\psiu{{\underline{\Psi}}}
\def\thu{{\underline{\Theta}}}
\def\aee{A_{\text{ee}}}
\def\aeo{A_{\text{eo}}}
\def\aoo{A_{\text{oo}}}
\def\aoe{A_{\text{oe}}}
\def\vbar{{\overline v}}
\def\endeq{\end{equation}}
\def\sn{S^{2n+1}}
\def\zp{\bold Z_p}
\def\cR{{\mathcal R}}
\def\P{{\mathcal P}}
\def\cF{{\mathcal F}}
\def\cQ{{\mathcal Q}}
\def\cj{{\cal J}}
\def\zt{{\bold Z}_2}
\def\bs{{\bold s}}
\def\bof{{\bold f}}
\def\bq{{\bold Q}}
\def\be{{\bold e}}
\def\Hom{\on{Hom}}
\def\ker{\on{ker}}
\def\kot{\widetilde{KO}}
\def\coker{\on{coker}}
\def\da{\downarrow}
\def\colim{\operatornamewithlimits{colim}}
\def\zphat{\bz_2^\wedge}
\def\io{\iota}
\def\Om{\Omega}
\def\Prod{\prod}
\def\e{{\cal E}}
\def\zlt{\Z_{(2)}}
\def\exp{\on{exp}}
\def\abar{{\overline a}}
\def\xbar{{\overline x}}
\def\ybar{{\overline y}}
\def\zbar{{\overline z}}
\def\Rbar{{\overline R}}
\def\nbar{{\overline n}}
\def\cbar{{\overline c}}
\def\qbar{{\overline q}}
\def\bbar{{\overline b}}
\def\et{{\widetilde E}}
\def\ni{\noindent}
\def\coef{\on{coef}}
\def\den{\on{den}}
\def\lcm{\on{l.c.m.}}
\def\vi{v_1^{-1}}
\def\ot{\otimes}
\def\psibar{{\overline\psi}}
\def\thbar{{\overline\theta}}
\def\mhat{{\hat m}}
\def\exc{\on{exc}}
\def\ms{\medskip}
\def\ehat{{\hat e}}
\def\etao{{\eta_{\text{od}}}}
\def\etae{{\eta_{\text{ev}}}}
\def\dirlim{\operatornamewithlimits{dirlim}}
\def\gt{\widetilde{L}}
\def\lt{\widetilde{\lambda}}
\def\st{\widetilde{s}}
\def\ft{\widetilde{f}}
\def\sgd{\on{sgd}}
\def\lfl{\lfloor}
\def\rfl{\rfloor}
\def\ord{\on{ord}}
\def\gd{{\on{gd}}}
\def\rk{{{\on{rk}}_2}}
\def\nbar{{\overline{n}}}
\def\MC{\on{MC}}
\def\lg{{\on{lg}}}
\def\cB{\mathcal{B}}
\def\cS{\mathcal{S}}
\def\cP{\mathcal{P}}
\def\N{{\Bbb N}}
\def\Z{{\Bbb Z}}
\def\Q{{\Bbb Q}}
\def\R{{\Bbb R}}
\def\C{{\Bbb C}}
\def\F{{\Bbb F}}
\def\l{\left}
\def\r{\right}
\def\mo{\on{mod}}
\def\xt{\times}
\def\notimm{\not\subseteq}
\def\Remark{\noindent{\it  Remark}}
\def\kut{\widetilde{KU}}
\def\tz{tikzpicture}
\def\*#1{\mathbf{#1}}
\def\0{$\*0$}
\def\1{$\*1$}
\def\22{$(\*2,\*2)$}
\def\33{$(\*3,\*3)$}
\def\ss{\smallskip}
\def\ssum{\sum\limits}
\def\dsum{\displaystyle\sum}
\def\la{\langle}
\def\ra{\rangle}
\def\on{\operatorname}
\def\od{\text{od}}
\def\ev{\text{ev}}
\def\o{\on{o}}
\def\U{\on{U}}
\def\lg{\on{lg}}
\def\a{\alpha}
\def\bz{{\Bbb Z}}
\def\eps{\varepsilon}
\def\bc{{\bold C}}
\def\bN{{\bold N}}
\def\nut{\widetilde{\nu}}
\def\tfrac{\textstyle\frac}
\def\b{\beta}
\def\G{\Gamma}
\def\g{\gamma}
\def\zt{{\Bbb Z}_2}
\def\pt{\widetilde{p}}
\def\zth{{\bold Z}_2^\wedge}
\def\bs{{\bold s}}
\def\bx{{\bold x}}
\def\bof{{\bold f}}
\def\bq{{\bold Q}}
\def\be{{\bold e}}
\def\lline{\rule{.6in}{.6pt}}
\def\xb{{\overline x}}
\def\xbar{{\overline x}}
\def\ybar{{\overline y}}
\def\zbar{{\overline z}}
\def\ebar{{\overline \be}}
\def\nbar{{\overline n}}
\def\rbar{{\overline r}}
\def\Mbar{{\overline M}}
\def\et{{\widetilde e}}
\def\ni{\noindent}
\def\ms{\medskip}
\def\ehat{{\hat e}}
\def\what{{\widehat w}}
\def\Yhat{{\widehat Y}}
\def\nbar{{\overline{n}}}
\def\minp{\min\nolimits'}
\def\mul{\on{mul}}
\def\N{{\Bbb N}}
\def\Z{{\Bbb Z}}
\def\Q{{\Bbb Q}}
\def\R{{\Bbb R}}
\def\C{{\Bbb C}}
\def\notint{\cancel\cap}
\def\se{\operatorname{secat}}
\def\cS{\mathcal S}
\def\cR{\mathcal R}
\def\el{\ell}
\def\TC{\on{TC}}
\def\dstyle{\displaystyle}
\def\ds{\dstyle}
\def\mt{\widetilde{\mu}}
\def\zcl{\on{zcl}}
\def\Vb#1{{\overline{V_{#1}}}}
\def\tz{tikzpicture}

\def\Remark{\noindent{\it  Remark}}
\title[$BP$-homology of elementary 2-groups]
{$BP$-homology of elementary abelian 2-groups: $BP$-module structure}
\author{Donald M. Davis}
\address{Department of Mathematics, Lehigh University\\Bethlehem, PA 18015, USA}
\email{dmd1@lehigh.edu}
\date{June 25, 2018}

\keywords{Brown-Peterson homology, symmetric polynomials,  Dickson invariants}
\thanks {2000 {\it Mathematics Subject Classification}: 55N20, 05E05, 15A15, 13A50.}

\maketitle
\begin{abstract} We determine the $BP_*$-module structure, mod higher filtration, of the main part of the $BP$-homology of elementary abelian 2-groups. The action is related to symmetric polynomials and to Dickson invariants.
 \end{abstract}
\section{Introduction and results}\label{intro} Let $BP_*(-)$ denote  Brown-Peterson homology localized at 2. Its coefficient groups $BP_*$ are a polynomial algebra over $\Z_{(2)}$ on classes $v_j$, $j\ge1$, of grading $2(2^j-1)$. Let $v_0=2$.  As was done in \cite{JW} and \cite{JWY}, we consider
$\bigotimes_{BP_*}^k BP_*(B\Z/2)$, which is a $BP_*$-direct summand of $BP_*(B(\Z/2)^k)$. We determine the $BP_*$-module structure of $\bigotimes_{BP_*}^k BP_*(B\Z/2)$ modulo terms which are more highly divisible by $v_j$'s. Information about the action of $v_0$ was applied to problems in topology in \cite{D2} and \cite{SW}. In the forthcoming paper \cite{DTC}, we apply it to another problem, higher topological complexity of real projective spaces. In Theorem \ref{thm4} of the current paper, we obtain complete explicit information about the $v_0$-action (mod higher filtration). In Theorem \ref{thm1}, we determine the action of all $v_j$'s as quotients of symmetric polynomials, and in Theorem \ref{thm2} and Corollary \ref{BPkcor} we give explicit formulas as symmetric polynomials in certain families of cases. In Section \ref{Dicksec}, we discuss relationships of our symmetric polynomials with the Dickson invariants.

Now we explain this more explicitly.
There are $BP_*$-generators $z_i\in BP_{2i-1}(B\Z/2)$ for $i\ge1$, and $\bigotimes_{BP_*}^k BP_*(B\Z/2)$ is spanned as a $BP_*$-module by classes $z_I=z_{i_1}\ot\cdots\ot z_{i_k}$ for $I=(i_1,\ldots,i_k)$ with $i_j\ge1$. Let $Z_k$ denote the graded set consisting of all such classes $z_I$. It was proved in \cite[Thm 3.2]{JW} that $\bigotimes_{BP_*}^k BP_*(B\Z/2)$ admits a decreasing filtration by
$BP_*$-submodules $F_s$ such that, for $s\ge0$, the quotient $F_s/F_{s+1}$ is a vector space over the prime field $\F_2$ with basis all classes $(v_k^{t_k}v_{k+1}^{t_{k+1}}\cdots)z_I$ with $z_I\in Z_k$, $t_i\ge0$, and $\sum t_i=s$.

Define an action of $\F_2[x_1,\ldots,x_k]$ on the $\F_2$-vector space with basis $Z_k$ by $$x_1^{e_1}\cdots x_k^{e_k}\cdot z_I=z_{I-E},$$ where $I-E=(i_1-e_1,\ldots,i_k-e_k)$; here, by convention, $z_J=0$ if any entry of $J$ is $\le0$. For positive integers $t_1,\ldots,t_r$, let $m_{t_1,\ldots,t_r}$ denote the monomial symmetric polynomial in $x_1,\ldots,x_k$, the smallest symmetric polynomial containing the monomial $x_1^{t_1}\cdots x_r^{t_r}$. Over $\F_2$, if $r=k$ and the $t_i$ are distinct, it equals the Vandermonde determinant
$$\begin{vmatrix}x_1^{t_1}&\cdots&x_1^{t_k}\\ &\vdots&\\ x_k^{t_1}&\cdots&x_k^{t_k}\end{vmatrix}.$$

Our first theorem determines the action of $v_j$, $0\le j\le k-1$, from $F_s/F_{s+1}$ to $F_{s+1}/F_{s+2}$, as a ratio of monomial symmetric polynomials in $x_1,\ldots,x_k$. Note that $k$ is fixed throughout, and we are always dealing with polynomials over $\F_2$.
This theorem will be proved in Section \ref{sec2}.
\begin{thm}\label{thm1} If $F_s$ is as above, and $0\le j\le k-1$, the action of $v_j$ from $F_s/F_{s+1}$ to $F_{s+1}/F_{s+2}$ is multiplication by $\ds\sum_{\ell\ge k}v_\ell p_{\ell,j}$, where \begin{equation}\label{pdef}p_{\ell,j}=\frac{m_{2^0,\ldots,\widehat{2^j},\ldots,2^{k-1},2^\ell}}{m_{2^0,\ldots,2^{k-1}}}.\end{equation}
(The $\widehat {2^j}$ notation denotes omission.) Moreover, $p_{\ell,j}$ is a symmetric polynomial, mod $2$.
\end{thm}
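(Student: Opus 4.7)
The plan is to exploit a fundamental relation in $BP_*(B\Z/2)$ coming from the 2-series of the Brown-Peterson formal group law. Capping the identity $[2]_F(x) \equiv \sum_{j \geq 0} v_j x^{2^j}$ (mod decomposables in the $v$'s) into homology yields
\[
\sum_{j \geq 0} v_j\, x^{2^j}\, z \;\equiv\; 0 \pmod{F_{s+2}}
\]
whenever $z \in F_s$, where $x$ acts by the shift $z_i \mapsto z_{i-1}$. Applying this inside each of the $k$ factors of $\bigotimes^k_{BP_*} BP_*(B\Z/2)$ yields, for each $z_I \in F_s$ and each $i \in \{1,\dots,k\}$, a relation
\[
R_i\colon\ \sum_{j \geq 0} v_j\, x_i^{2^j}\, z_I \;\equiv\; 0 \pmod{F_{s+2}},
\]
with $x_i$ shifting only the $i$-th coordinate of $I$.

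Splitting each $R_i$ into low ($j < k$) and high ($\ell \geq k$) parts, and using that module multiplication by the $v$'s commutes with the polynomial action of the $x_i$'s, one obtains a $k \times k$ linear system in $v_0,\ldots,v_{k-1}$ with coefficient matrix $M = (x_i^{2^j})$. Cramer's rule in this commutative operator ring produces
\[
\det(M)\cdot v_j\, z_I \;\equiv\; \sum_{\ell \geq k} v_\ell \, \det\!\bigl(M[\,j \leftarrow (x_i^{2^\ell})_i\,]\bigr)\, z_I \pmod{F_{s+2}}.
\]
Over $\F_2$, any $k \times k$ determinant $\det(x_i^{e_j})$ with distinct exponents coincides with the monomial symmetric polynomial $m_{e_1,\ldots,e_k}$, so these two determinants are precisely $m_{2^0,\ldots,2^{k-1}}$ and $m_{2^0,\ldots,\widehat{2^j},\ldots,2^{k-1},2^\ell}$, giving the ratio \eqref{pdef}.

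One must then verify that $p_{\ell,j}$ is a genuine polynomial in $\F_2[x_1,\ldots,x_k]$; once this is known, symmetry is automatic, since a quotient of two $S_k$-invariants which lies in the polynomial ring must itself be $S_k$-invariant. The denominator $m_{2^0,\ldots,2^{k-1}}$ is the Moore determinant, which factors over $\F_2$ as $\prod_{0 \neq c \in \F_2^k}(c_1 x_1 + \cdots + c_k x_k)$, the product of all $2^k - 1$ nonzero $\F_2$-linear forms. For any such form $y$, imposing $y = 0$ makes one row of the numerator matrix a Frobenius-linear combination of the others (since $(a + b)^{2^?} = a^{2^?} + b^{2^?}$ in characteristic two), forcing the determinant to vanish, so $y$ divides the numerator. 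As these forms are pairwise non-associate irreducibles in a UFD, their product --- the denominator --- divides the numerator as well.

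The principal obstacle is pinning down the input relation $R_i$ modulo $F_{s+2}$ rather than merely modulo $F_{s+1}$; this filtration-refined form of the dualized 2-series identity is exactly what lets the Cramer manipulation deliver an answer valid one filtration level deeper than naively expected. A secondary point, handled by checking the formula on basis elements where the polynomial shifts in $\det M$ do not collapse, is the passage from the Cramer equation $\det(M)\cdot \Delta \equiv 0$ to the operator identity $\Delta = 0$ for $\Delta = v_j - \sum_\ell v_\ell p_{\ell,j}$, since $\det M$ need not act injectively on all of $F_{s+1}/F_{s+2}$. Everything else reduces to linear algebra combined with the classical product factorization of the Moore determinant in characteristic two.
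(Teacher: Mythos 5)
Your proposal follows essentially the same route as the paper: the mod-$(v_0,v_1,\ldots)^2$ form of the dualized $2$-series gives the relations $\sum_j v_j x_i^{2^j}z_I\equiv0$, Cramer's rule on the resulting $k\times k$ system produces the ratio of monomial symmetric (Vandermonde) determinants in (\ref{pdef}), and polynomiality is obtained by factoring the denominator $m_{2^0,\ldots,2^{k-1}}$ as the product of all nonzero linear forms and showing each form divides the numerator via Frobenius-linearity of the rows --- which is precisely the referee-supplied argument included in the paper, merely organized there by induction on $k$. The only point where the paper does a bit more is your ``secondary point'': instead of deriving $\det(M)\cdot\Delta=0$ and then dividing by the non-injective operator $\det(M)$, the paper first argues structurally (uniformity of the relations together with naturality under $\bigotimes BP_*(RP^{2n_i})\to\bigotimes BP_*(B\Z/2)$) that a uniform, subscript-nonincreasing polynomial action exists, which both sidesteps the injectivity issue and covers the basis elements with small subscripts, where your check on ``non-collapsing'' elements does not directly apply.
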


It is not {\it a priori} clear that the quotient on the right hand side of (\ref{pdef}) should be a polynomial mod 2. In fact, if the $2^\ell$ there is a replaced by a non-2-power and $k\ge3$, then the ratio is not a polynomial mod 2.

We have obtained explicit polynomial formulas for $p_{\ell,j}$ in several cases. These will be proved in Section \ref{compsec}. The first is the complete solution when $k=3$.
\begin{thm}\label{thm2} If $k=3$ and $\ell\ge3$, then
\begin{eqnarray*}p_{\ell,0}&=&\sum_{\substack{i\ge j\ge k>0\\ i+j+k=2^\ell-1}}\tbinom{j+k}k m_{i,j,k}\\
p_{\ell,1}&=&\sum_{\substack{i\ge j>0\\ i+j=2^\ell-2}}(1+j)m_{i,j,0}+\sum_{\substack{i\ge j\ge k>0\\ i+j+k=2^\ell-2}}(1+\tbinom{j+k}{k-1}+\tbinom{j+k+1}{k+1})m_{i,j,k}\\
p_{\ell,2}&=&\sum_{\substack{i\ge j\ge k\ge0\\ i+j+k=2^{\ell}-4}}(1+\tbinom{j+k+2}{k+1})m_{i,j,k}.\end{eqnarray*}
\end{thm}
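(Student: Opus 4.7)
My plan is to verify each formula by checking directly the identity $p_{\ell,j}\cdot m_{1,2,4}=m_A$, where $m_A$ is the corresponding numerator $m_{2^0,\ldots,\widehat{2^j},\ldots,2^2,2^\ell}$ from (\ref{pdef}); that is, I multiply out the claimed expression for $p_{\ell,j}$ by $m_{1,2,4}$ and match monomial-symmetric coefficients. By Theorem~\ref{thm1} the quotient $p_{\ell,j}$ is a polynomial, $S_3$-symmetric over $\F_2$, so it suffices to match the coefficient of each $m_\lambda$ on both sides.

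The basic tool is the monomial-symmetric product rule: for a partition $(i,j,k)$ with $i\ge j\ge k\ge 0$, the coefficient of $m_\lambda$ in the product $m_{i,j,k}\cdot m_{1,2,4}$ equals (mod~$2$) the number of permutations $\beta$ of $(1,2,4)$ such that $\lambda-\beta$ is a permutation of $(i,j,k)$. For a fixed target $\lambda$ of the correct total degree, the coefficient of $m_\lambda$ on the left-hand side is therefore the sum, over at most six permutations $\beta$ of $(1,2,4)$, of the stated binomial weight applied to the sorted form of $\lambda-\beta$, subject to its lying in the indexing set for $p_{\ell,j}$ (nonnegative entries, with strict positivity when $j=0$). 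The desired identity then reduces to showing that this sum vanishes mod $2$ except when $\lambda$ is the sorted numerator tuple, in which case it equals $1$.

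I would address the cases in increasing order of complexity: $p_{\ell,2}$ first (one binomial term, smallest part $\ge 0$), then $p_{\ell,1}$ (piecewise formula split by whether the smallest part is $0$), and finally $p_{\ell,0}$ (all parts strictly positive). The arithmetic engine is Lucas's theorem for $\binom{m}{n}$ mod $2$, supplemented by Pascal's rule $\binom{n}{a}+\binom{n}{a+1}=\binom{n+1}{a+1}$ to combine pairs of adjacent contributions. The principal obstacle is the $p_{\ell,0}$ case: a generic target $\lambda$ with three distinct positive parts receives contributions from all six permutations of $(1,2,4)$, and establishing the mod-$2$ cancellation of the six binomial coefficients $\binom{j+k}{k}$ requires a careful parity analysis based on the binary expansions of the parts of $\lambda$ and their differences with $\{1,2,4\}$. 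Once this is organized --- for instance by grouping the six $\beta$'s into three pairs differing by a transposition --- the three formulas follow by specialization.
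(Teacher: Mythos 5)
Your strategy---multiplying the claimed formula for $p_{\ell,j}$ by the denominator $m_{1,2,4}$ and checking that the product equals the numerator $m_{2^0,\ldots,\widehat{2^j},\ldots,2^2,2^\ell}$---is logically sound (the quotient in (\ref{pdef}) is taken in the domain $\F_2[x_1,x_2,x_3]$, so the identity $p_{\ell,j}\cdot m_{1,2,4}=m_A$ does characterize $p_{\ell,j}$), and your product rule for monomial symmetric polynomials is stated correctly. This is genuinely different from the paper, which \emph{derives} the formulas: it row-reduces the system (\ref{eq}) to a triangular form whose entries are complete homogeneous polynomials $h_d$, establishes the identity
$$h_{n+1}(x_1,x_2,x_3)\equiv(x_1+x_2+x_3)\sum_{k,j}\bigl(\tbinom{n+2-k}{j+1}-1\bigr)x_1^{n-j-k}x_2^jx_3^k \pmod 2,$$
and then obtains $p_{\ell,2}$, $p_{\ell,1}$, $p_{\ell,0}$ in turn by back substitution, using Lucas's theorem in the form $\binom{2^\ell-2-k}{j+1}\equiv\binom{j+k+2}{j+1}$ and short Pascal-rule telescopes at each stage.

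The problem is that your proposal stops exactly where the proof would have to begin. The entire content of the theorem is the assertion that, for each target partition $\lambda$ other than the sorted numerator tuple, the sum over the (up to six) admissible permutations $\beta$ of $(1,2,4)$ of the binomial weights attached to $\mathrm{sort}(\lambda-\beta)$ vanishes mod $2$---and you explicitly defer this ("requires a careful parity analysis," "once this is organized"). No pairing of the six $\beta$'s is exhibited, no identity among the six binomial coefficients is proved, and even the bookkeeping of which $\beta$ are admissible (entries nonnegative, or strictly positive for $j=0$; degenerate $\lambda$ with repeated parts; boundary terms where $\lambda-\beta$ leaves the indexing set) is not addressed, although these boundary effects are precisely where the isolated surviving coefficient $1$ at $\lambda=\mathrm{sort}(A)$ must come from. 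As written this is a plan for a proof, not a proof; to complete it you would need to prove a six-term mod-$2$ binomial identity that is at least as delicate as the three-term Pascal computations the paper actually carries out. I would suggest either executing that cancellation in full (starting with $p_{\ell,2}$, where the weight is $1+\tbinom{j+k+2}{k+1}$ and the analysis is most tractable), or switching to the paper's triangularization, where the back-substitution reduces each step to a two- or three-term Pascal sum.
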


Incorporating Theorem \ref{thm2} into Theorem \ref{thm1} gives the $v_0$-, $v_1$-, and $v_2$-action, mod higher filtration, in $BP_*(B\Z/2)\ot_{BP_*} BP_*(B\Z/2)\ot_{BP_*}BP_*(B\Z/2)$. For example, $v_0$ acts as
\begin{equation}\label{v0}v_3m_{4,2,1}+v_4(m_{12,2,1}+m_{10,4,1}+m_{8,6,1}+m_{9,4,2}+m_{8,5,2}+m_{8,4,3})+\cdots,\end{equation}
where the omitted terms involve $v_\ell$ for $\ell\ge5$.

We have also obtained the explicit polynomial formula for (\ref{pdef}) for any $k$ if  $\ell=k$.
\begin{thm}\label{thm3} If $\ell=k$, then $p_{\ell,j}=p_{k,j}$ equals the sum of all monomials of degree $2^k-2^j$ in $x_1,\ldots,x_k$ in which all nonzero exponents are $2$-powers. Here $0\le j\le k-1$.\end{thm}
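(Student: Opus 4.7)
\noindent\emph{Proof plan.} The plan is to identify $p_{k,j}$ with the classical Dickson invariant $c_{k,j}$ and then to recognize that invariant as the claimed sum. Let $W$ denote the $\F_2$-span of $x_1,\ldots,x_k$ inside $\F_2[x_1,\ldots,x_k]$, and consider the $\F_2$-linear polynomial
$$f_k(T)\;=\;\prod_{v\in W}(T+v)\;=\;T^{2^k}+\sum_{s=0}^{k-1}c_{k,s}\,T^{2^s}\in\F_2[x_1,\ldots,x_k][T].$$
Since each $x_i$ lies in $W$, evaluating at $T=x_i$ yields the Dickson relation $x_i^{2^k}=\sum_{s=0}^{k-1}c_{k,s}\,x_i^{2^s}$ in $\F_2[x_1,\ldots,x_k]$.

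First I would show $p_{k,j}=c_{k,j}$. By the remark preceding Theorem \ref{thm1}, both the numerator and denominator of (\ref{pdef}) are Vandermonde-type $k\times k$ determinants. In the numerator---whose columns carry the exponents $\{2^0,\ldots,\widehat{2^j},\ldots,2^{k-1},2^k\}$---subtract $c_{k,s}$ times the column at exponent $2^s$ from the column at exponent $2^k$, for each $s\in\{0,\ldots,k-1\}\setminus\{j\}$. By the Dickson relation, the new last column has entries $c_{k,j}\,x_i^{2^j}$, so the numerator becomes $c_{k,j}$ times the denominator $V_k=m_{2^0,\ldots,2^{k-1}}$, giving $p_{k,j}=c_{k,j}$.

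For the second step, let $Q_{k,j}$ denote the target polynomial and set $g_k(T):=T^{2^k}+\sum_{s=0}^{k-1}Q_{k,s}\,T^{2^s}$. Both $g_k$ and $f_k$ are monic of degree $2^k$ and $\F_2$-linear in $T$, so to conclude $g_k=f_k$ (hence $Q_{k,j}=c_{k,j}$) it suffices to show $g_k$ vanishes on $W$: then $g_k-f_k$ would be a polynomial of degree strictly less than $2^k$ with $2^k$ distinct roots, forcing it to be zero over the fraction field. By $\F_2$-linearity in $T$ and the symmetry of each $Q_{k,s}$ in $x_1,\ldots,x_k$, this further reduces to showing $g_k(x_1)=0$.

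For that final step I would compute, mod $2$, the coefficient of an arbitrary degree-$2^k$ monomial $M=x_1^{a_1}\cdots x_k^{a_k}$ in $g_k(x_1)$. If some $a_i$ with $i\ne 1$ is neither $0$ nor a power of $2$, the coefficient is automatically $0$. Otherwise it equals $[M=x_1^{2^k}]+\nu\pmod 2$, where $\nu=\#\{\sigma\in\{0,\ldots,k-1\}:a_1-2^\sigma\in\{0,1,2,4,\ldots\}\}$. A short case analysis on the binary expansion of $a_1$ shows $\nu$ is always even, except when $a_1=2^k$ (forcing $M=x_1^{2^k}$, where $\nu=1$ cancels the bracket term) or $a_1=1$. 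The crux of the proof is ruling out the last exception: if $a_1=1$ then $a_2+\cdots+a_k=2^k-1$, and since the minimum number of $2$-powers (with repetition) summing to an integer equals its binary popcount, expressing $2^k-1$ would require at least $k$ nonzero summands among only $k-1$ variables---impossible. Hence every coefficient of $g_k(x_1)$ vanishes mod $2$, so $g_k=f_k$ and $p_{k,j}=Q_{k,j}$.
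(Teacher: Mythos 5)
Your proof is correct, and its computational heart coincides with the paper's: the identity $g_k(x_1)=0$ that you verify by counting, mod $2$, the number of ways each monomial arises is exactly the paper's identity (\ref{eq2}), $\sum_{i=1}^{k}x_1^{2^{i-1}}g_{2^k-2^{i-1}}=x_1^{2^k}$, which the paper proves by the same pairing of occurrences (every monomial other than $x_1^{2^k}$ arises from exactly two summands). The surrounding logic differs, though. The paper concludes directly: the claimed polynomials satisfy the linear system (\ref{eq}) for $\ell=k$, whose solution is unique by Cramer's rule, so they must equal the $p_{k,j}$. You instead route through the Dickson invariants, first proving $p_{k,j}=c_{k,j}$ by column operations on the Vandermonde numerator (this is the paper's Proposition \ref{kjprop}, obtained there by citing Wilkerson and deferred to Section \ref{Dicksec}) and then identifying the claimed sum with $c_{k,j}$ via the additive polynomial $\prod_{v\in W}(T+v)$; in effect you prove Corollary \ref{Dickcor} and Theorem \ref{thm3} simultaneously, at the cost of extra machinery the paper's direct uniqueness argument avoids. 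One genuine improvement on your side: you explicitly dispose of the boundary case $a_1=1$ (where a monomial $x_1\,x_2^{a_2}\cdots x_k^{a_k}$ would arise from only one summand) by the popcount observation that $2^k-1$ cannot be written as a sum of at most $k-1$ powers of $2$; the paper's assertion that each spurious monomial ``can be obtained from either the $i$th term or the $(i+1)$st'' silently relies on the same fact to exclude the case $i=0$, and your argument makes this explicit.
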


Theorem \ref{thm3} gives the formula for the $v_k$-component of the $BP_*$-module structure, modulo higher filtration, of $\bigotimes_{BP_*}^k BP_*(B\Z/2)$. It is complete information, mod higher filtration, for $BP\la k\ra$ homology.
 Johnson-Wilson homology $BP\la k\ra$, introduced in \cite{JW1}, has coefficients $\Z_{(2)}[v_1,\ldots,v_k]$. By \cite[Thm 3.2]{JW} and \cite[Thm 1.1]{JWY}, as an abelian group $\bigotimes^k_{BP\la k\ra_*}BP\la k\ra_*(B\Z/2)$ has basis $\{v_k^jz_I:\ j\ge0,\ z_I\in Z_k\}$.
\begin{cor} In $\bigotimes^k_{BP\la k\ra_*}BP\la k\ra_*(B\Z/2)$, for $0\le j\le k-1$,
$$v_j\cdot z_I\equiv v_k\sum_Ez_{I-E}$$
mod higher filtration, where $E=(e_1,\ldots,e_k)$ ranges over all $k$-tuples such that all nonzero $e_j$ are $2$-powers, and $\sum e_j=2^k-2^j$.\label{BPkcor}\end{cor}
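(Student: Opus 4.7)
The plan is to specialize Theorem~\ref{thm1} to $BP\la k\ra$-homology and then apply Theorem~\ref{thm3}. By \cite[Thm~3.2]{JW} and \cite[Thm~1.1]{JWY}, the filtration on $\bigotimes^k_{BP_*} BP_*(B\Z/2)$ descends to the $BP\la k\ra$-level quotient obtained by killing $v_\ell$ for $\ell>k$, and the reduction yields the stated $\F_2$-basis $\{v_k^j z_I : j\ge 0,\ z_I\in Z_k\}$ of $\bigotimes^k_{BP\la k\ra_*} BP\la k\ra_*(B\Z/2)$. In the formula
$$v_j\cdot z_I \equiv \sum_{\ell\ge k} v_\ell\, p_{\ell,j}\cdot z_I \pmod{F_{s+2}}$$
supplied by Theorem~\ref{thm1}, every term with $\ell>k$ vanishes on the $BP\la k\ra$-side, leaving only the $\ell=k$ contribution $v_k\, p_{k,j}\cdot z_I$.

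Next I invoke Theorem~\ref{thm3}, which expresses $p_{k,j}$ as $\sum_E x_1^{e_1}\cdots x_k^{e_k}$, where $E=(e_1,\ldots,e_k)$ ranges over $k$-tuples whose nonzero entries are $2$-powers and which satisfy $\sum e_i = 2^k - 2^j$. Combined with the defining polynomial action $x_1^{e_1}\cdots x_k^{e_k}\cdot z_I = z_{I-E}$ (and the convention $z_{J}=0$ if any entry of $J$ is non-positive), this immediately yields $v_j\cdot z_I \equiv v_k \sum_E z_{I-E}$ mod higher filtration, as claimed. The only substantive point to verify is that the reduction from $BP_*$ to $BP\la k\ra_*$ is compatible with the filtration and its subquotients; this is exactly the content of the two cited results, so beyond that book-keeping there is no real obstacle—Corollary~\ref{BPkcor} is essentially just Theorem~\ref{thm3} translated, via Theorem~\ref{thm1}, into the $BP\la k\ra$-language.
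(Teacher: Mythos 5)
Your argument is correct and follows exactly the route the paper intends: the corollary is stated as an immediate consequence of Theorem \ref{thm1} (keeping only the $\ell=k$ term after killing $v_\ell$ for $\ell>k$) together with the explicit description of $p_{k,j}$ in Theorem \ref{thm3}. No further comment is needed.
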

\noindent This generalizes \cite[Cor 2.7]{JWY}, which says roughly that  $v_0$ acts as $v_km_{2^{k-1},2^{k-2},\ldots,1}$.

Finally, our most elaborate, and probably most useful, explicit calculation is given in the following result, which gives the complete formula for the $v_0$-action, mod higher filtration. This is useful since $v_0$ corresponds to multiplication by 2.
\begin{thm}\label{thm4} In $\bigotimes_{BP_*}^kBP_*(B\Z/2)$, $v_0$ acts as $\ds\sum_{\ell\ge k}v_\ell\cdot p_{\ell,0}$ mod higher filtration, where
$$p_{\ell,0}=\sum_f\prod_{i=0}^{\ell-1}x_{f(i)}^{2^i},$$
where $f$ ranges over all surjective functions $\{0,\ldots,\ell-1\}\to\{1,\ldots,k\}$. Equivalently, $p_{\ell,0}=\sum m_{\|S_1\|,\ldots,\|S_k\|}$, where the sum ranges over all $\|S_1\|>\cdots>\|S_k\|$ with $S_1,\ldots,S_k$ a partition of $\{1,2,4,\ldots,2^{\ell-1}\}$ into $k$ nonempty subsets. Here $\|S\|$ is the sum of the elements of $S$.\end{thm}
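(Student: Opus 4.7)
The plan is to reinterpret $p_{\ell,0}$ as the coefficient of $z$ in a naturally occurring additive polynomial, and then extract that coefficient via Lagrange interpolation on the $\F_2$-linear subspace $V_k:=\F_2\la x_1,\ldots,x_k\ra\subset\F_2[x_1,\ldots,x_k]$, which has $|V_k|=2^k$. By Theorem \ref{thm1}, $p_{\ell,0}=m_N/m_D$ with $m_N:=m_{2,4,\ldots,2^{k-1},2^\ell}$ and $m_D:=m_{1,2,\ldots,2^{k-1}}$. I would consider the $(k{+}1)\xt(k{+}1)$ matrix $M(z;\ell)$ with top row $(z,z^2,z^4,\ldots,z^{2^{k-1}},z^{2^\ell})$ and $i$-th row $(x_i,x_i^2,\ldots,x_i^{2^{k-1}},x_i^{2^\ell})$. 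As a polynomial in $z$, $\det M(z;\ell)$ has only $2$-power exponents (hence is $\F_2$-additive) and vanishes on $V_k$, since Frobenius makes the top row an $\F_2$-combination of the remaining rows whenever $z\in V_k$. Laplace expansion along the top row identifies the coefficients of $z$ and $z^{2^\ell}$ as $m_N$ and $m_D$ respectively, so $\det M(z;\ell)/m_D=z^{2^\ell}+\sum_{j=0}^{k-1}p_{\ell,j}z^{2^j}$. Setting $L(z):=\sum_{j=0}^{k-1}p_{\ell,j}z^{2^j}$ then produces the unique additive polynomial of degree $\le 2^{k-1}$ with $L(x_i)=x_i^{2^\ell}$ for all $i$, and $p_{\ell,0}$ is its linear coefficient.

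By additivity, $L(v)=v^{2^\ell}$ for every $v\in V_k$. Let $L_k(z):=\prod_{v\in V_k}(z+v)$. The Lagrange interpolation denominators $\prod_{u\in V_k\setminus\{v\}}(v+u)$ all equal $\prod_{0\ne w\in V_k}w=m_D$ (via the substitution $w=v+u$ and Moore's theorem). Since $\deg L<2^k=|V_k|$, Lagrange interpolation gives
$$L(z)=\frac{1}{m_D}\sum_{v\in V_k}\frac{v^{2^\ell}L_k(z)}{z+v},$$
and the $v=0$ summand vanishes. For $v\ne 0$, $L_k(z)/(z+v)=\prod_{u\in V_k\setminus\{v\}}(z+u)$ has coefficient of $z$ equal to $\sum_{w\ne v}\prod_{u\ne v,w}u$; every summand with $w\ne 0$ vanishes because the product includes the factor $u=0$, so only $w=0$ survives and contributes $\prod_{0\ne u\ne v}u=m_D/v$. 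Extracting the coefficient of $z$ yields
$$p_{\ell,0}=\frac{1}{m_D}\sum_{0\ne v\in V_k}v^{2^\ell}\cdot\frac{m_D}{v}=\sum_{0\ne v\in V_k}v^{2^\ell-1}=\sum_{\emptyset\ne T\subseteq\{1,\ldots,k\}}L_T^{2^\ell-1},$$
where $L_T:=\sum_{j\in T}x_j$ parametrizes the nonzero elements of $V_k$.

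Finally, using $2^\ell-1=2^0+2^1+\cdots+2^{\ell-1}$ together with the characteristic-$2$ Frobenius identity $L_T^{2^i}=\sum_{j\in T}x_j^{2^i}$,
$$L_T^{2^\ell-1}=\prod_{i=0}^{\ell-1}\sum_{j\in T}x_j^{2^i}=\sum_{f:\{0,\ldots,\ell-1\}\to T}\prod_{i=0}^{\ell-1}x_{f(i)}^{2^i}.$$
Summing over nonempty $T$ counts each function $f:\{0,\ldots,\ell-1\}\to\{1,\ldots,k\}$ exactly $2^{k-|\im f|}$ times (once per $T\supseteq\im f$), and this count is $1\bmod 2$ iff $f$ is surjective, proving the first formula of Theorem \ref{thm4}. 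The partition reformulation then follows because $\{2^0,\ldots,2^{\ell-1}\}$ has all distinct subset sums, so distinct unordered partitions yield distinct monomial symmetric polynomials. The most technical step will be the Lagrange interpolation and coefficient extraction in the second paragraph, relying on the characteristic-$2$ coincidence that the Lagrange denominators are all equal to $m_D$ and on the presence of $0\in V_k$ to collapse the elementary symmetric sum for the coefficient of $z$ to a single nonvanishing term.
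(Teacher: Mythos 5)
Your argument is correct, and it takes a genuinely different route from the paper. The paper deduces Theorem \ref{thm4} from Proposition \ref{biglem}, a purely combinatorial sign-reversing pairing: it shows directly that $m_{2^1,\ldots,2^{k-1},2^\ell}=m_{2^0,\ldots,2^{k-1}}\cdot\sum m_{\|S_1\|,\ldots,\|S_k\|}$ by grouping the matrices (\ref{s,t}) into pairs with equal column sums, through a somewhat delicate case analysis (Cases 1, 2a, 2b). You instead exploit the fact, already implicit in (\ref{eq}), that $L(z)=\sum_j p_{\ell,j}z^{2^j}$ is the unique $\F_2$-linearized polynomial of degree $\le 2^{k-1}$ interpolating $z\mapsto z^{2^\ell}$ on the $2^k$-element space $V_k$, and then use the characteristic-$2$ coincidences (all Lagrange denominators equal $\prod_{0\ne w}w=m_{2^0,\ldots,2^{k-1}}$ by the Moore identity the paper itself quotes in Section \ref{sec2}, and the presence of $0\in V_k$ collapsing the relevant elementary symmetric sum) to extract the linear coefficient. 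Each step checks out, including the final parity count $2^{k-|\im f|}\equiv 1\pmod 2$ iff $f$ is surjective, and the passage to unordered partitions using distinctness of subset sums of $\{2^0,\ldots,2^{\ell-1}\}$. What your approach buys is the clean intermediate identity $p_{\ell,0}=\sum_{0\ne v\in V_k}v^{2^\ell-1}$, which does not appear in the paper, meshes well with the Dickson-invariant discussion of Section \ref{Dicksec} (for $\ell=k$ it recovers the classical expression for $c_0$), and gives an independent proof that $p_{\ell,0}$ is a polynomial; what the paper's involution buys is a direct, self-contained combinatorial explanation of the partition description without passing through the fraction field. One caveat on scope: your coefficient extraction hinges on the special role of the $z^1$-coefficient (only the $w=0$ term survives), so unlike the paper's general framework it does not obviously extend to the other $p_{\ell,j}$.
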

\noindent See (\ref{v0}) for an explicit example of $p_{3,0}$ and $p_{4,0}$ when $k=3$. For example, the term $m_{10,4,1}$ in $p_{4,0}$ corresponds to $S_1=\{8,2\}$, $S_2=\{4\}$, and $S_3=\{1\}$, and this corresponds to the sum of all surjective functions $f:\{0,1,2,3\}\to\{1,2,3\}$ for which  $f(3)=f(1)$.

 We thank a referee for  many  useful suggestions. See especially Section \ref{Dicksec}.

\section{Proof of Theorem \ref{thm1}} \label{sec2}
In this section, we prove Theorem \ref{thm1}.
\begin{proof}[Proof of Theorem \ref{thm1}]
Let $Q=\bigotimes^k_{BP_*}BP_*(B\Z/2)$. Let $z_i$ and $z_I$ be as in the second paragraph of the paper. By \cite{JW}, $Q$ is spanned by classes $(v_0^{t_0}v_1^{t_1}\cdots)z_I$ with only relations $\sum_{j\ge0}a_jz_{i-j}$ in any factor, where $a_j\in BP_{2j}$ are coefficients in the [2]-series. By \cite[3.17]{W}, these satisfy, mod $(v_0,v_1,\ldots)^2$,
$$a_j\equiv\begin{cases}v_i&j=2^i-1,\  i\ge0\\ 0&j+1\text{ not a 2-power.}\end{cases}$$
Let $F_s$ denote the ideal $(v_0,v_1,\ldots)^sQ$. Then $F_s/F_{s+1}$ is spanned by all
$(v_0^{t_0}v_1^{t_1}\cdots)z_I$ with $\sum t_j=s$, with relations \begin{equation}\label{vshort}\sum_{j\ge0}v_jz_{i-(2^j-1)}=0\end{equation}
in each factor. As proved in \cite[Thm 3.2]{JW} (see also \cite[2.3]{JWY}), this leads to an $\F_2$-basis for $F_s/F_{s+1}$ consisting of all $(v_k^{t_k}v_{k+1}^{t_{k+1}}\cdots)z_I$ with $\sum t_j=s$.

We claim that if $z_I\in F_0$ and $0\le j\le k-1$, then we must have
\begin{equation}\label{veq} v_jz_I=\sum_{\ell\ge k}v_\ell p_{\ell,j}z_I,\end{equation}
where $p_{\ell,j}$ is a symmetric polynomial in variables $x_1,\ldots,x_k$ of degree $2^\ell-2^j$, acting on $z_I$ by decreasing subscripts as described in the third paragraph of the paper. That the action is symmetric and uniform is due to the uniform nature of the relations (\ref{vshort}). That it never increases subscripts of $z_i$ is a consequence of naturality: there are inclusions $\bigotimes_{BP_*} BP_*(RP^{2n_i})\to\bigotimes^k_{BP_*} BP_*(B\Z/2)$ in which the only $z_I$ in the image are those with $i_t\le n_t$ for all $t$, and the $v_j$-actions are compatible.

Note that (\ref{vshort}) can be interpreted as saying that, for any $i\in\{1,\ldots,k\}$,
\begin{equation}\label{vr}\sum_{j\ge0} v_jx_i^{2^j-1}=0.\end{equation}
Since the $v_\ell$-components are independent if $\ell\ge k$, and (\ref{veq}) says that for $j<k\le \ell$ the $v_\ell$-component of the $v_j$-action is given by the (unknown) polynomial $p_{\ell,j}$, we obtain the equation
$$\sum_{j=0}^{k-1}p_{\ell,j}x_i^{2^j-1}=x_i^{2^\ell-1}$$
for any $i\in\{1,\ldots,k\}$ and $\ell\ge k$. After multiplying the $i$th equation by $x_i$, we obtain the system
\begin{equation}\label{eq}\begin{bmatrix}x_1&x_1^2&x_1^4&\cdots&x_1^{2^{k-1}}\\
&&\vdots&&\\
x_k&x_k^2&x_k^4&\cdots&x_k^{2^{k-1}}\end{bmatrix}
\begin{bmatrix}p_{\ell,0}\\ \vdots\\ p_{\ell,k-1}\end{bmatrix}=\begin{bmatrix}x_1^{2^\ell}\\ \vdots\\ x_k^{2^\ell}\end{bmatrix},\end{equation}
 whose solution as (\ref{pdef}) is given by Cramer's Rule. Our argument shows  that the components $p_{\ell,j}$ of the solution are polynomials, mod 2.

 The ratios on the RHS of (\ref{pdef}) can also be shown to be polynomials by the following algebraic argument, provided by the referee. Let $V$ denote the $\F_2$-vector space with basis $x_1,\ldots,x_k$. The denominator $m_{1,2,\ldots,2^{k-1}}$ in (\ref{pdef}) equals the product of the nonzero elements $v$ of $V$. We show that a Vandermonde determinant $D$ in $x_1,\ldots,x_k$ with distinct 2-power exponents $2^{t_j}$ is divisible by each $v$ in the unique factorization domain $\F_2[x_1,\ldots,x_k]$, and hence is divisible by their product.

 By induction on $k$ and expansion along rows, the determinant is divisible by all elements except perhaps $\ds\sum_{i=1}^k x_i$. Let $M_{k,j}$ denote the minor associated with $x_k^{2^{t_j}}$. Replacing the last row by the sum of the others shows that
 $$\sum_jM_{k,j}\sum_{i=1}^{k-1}x_i^{2^{t_j}}=0$$ since it is the determinant of a matrix with dependent rows. Thus
 $$D=\sum_j M_{k,j}x_k^{2^{t_j}}=\sum_jM_{k,j}\sum_{i=1}^kx_i^{2^{t_j}}=\sum_jM_{k,j}\biggl(\sum_{i=1}^kx_i\biggr)^{2^{t_j}}$$
 is divisible by $\ds\sum_{i=1}^kx_i$.

The $v_j$-action formula on $F_0$ applies also on $F_s$ by the nature of the module.
\end{proof}

\section{Proofs of explicit formulas for certain $p_{\ell,j}$}\label{compsec}
In this section, we prove Theorems \ref{thm2}, \ref{thm3}, and \ref{thm4}.
\begin{proof}[Proof of Theorem \ref{thm2}] Let $h_d(x_1,\ldots,x_r)$ denote the complete homogeneous polynomial of degree $d$.
With $k=3$, after  a few row operations, (\ref{eq}) reduces to
$$\begin{bmatrix}1&x_1&x_1^3\\ 0&1&h_2(x_1,x_2)\\ 0&0&x_1+x_2+x_3\end{bmatrix}\begin{bmatrix}p_{\ell,0}\\
p_{\ell,1}\\ p_{\ell,2}\end{bmatrix}=\begin{bmatrix}x_1^{2^\ell-1}\\ h_{2^\ell-2}(x_1,x_2)\\ h_{2^\ell-3}(x_1,x_2,x_3)\end{bmatrix}.$$

Using Pascal's formula, one easily verifies, mod 2,
$$h_{n+1}(x_1,x_2,x_3)\equiv(x_1+x_2+x_3)\sum_{k,j}\bigl(\tbinom{n+2-k}{j+1}-1\bigr)x_1^{n-j-k}x_2^jx_3^k.$$
Since $\binom{2^\ell-2-k}{j+1}\equiv\binom{j+k+2}{j+1}$, the result for $p_{\ell,2}$ follows.

Now we have
\begin{eqnarray*}p_{\ell,1}&=&h_{2^\ell-2}(x_1,x_2)-h_2(x_1,x_2)p_{\ell,2}\\
&=&\sum x_1^ix_2^{2^\ell-2-i}+(x_1^2+x_1x_2+x_2^2)\sum_{\substack{i\ge j\ge k\ge0\\ i+j+k=2^\ell-4}}(1+\tbinom{j+k+2}{k+1})m_{i,j,k}.\end{eqnarray*}
If $k>0$, the coefficient of $m_{i,j,k}$ in this is
$$(1+\tbinom{j+k+2}{k+1})+(1+\tbinom{j+k+1}{k+1})+(1+\tbinom{j+k}{k+1}),$$ which equals the claimed value. If $k=0$ and $j>0$, there is an extra 1 from the $\sum x_1^ix_2^{2^\ell-2-i}$, and we obtain $\tbinom{j+2}1+\tbinom{j+1}1+\tbinom j1\equiv 1+j$, as desired. The coefficient of $m_{2^\ell-4,0,0}$ is easily seen to be 0.

Finally, we obtain $p_{\ell,0}$ from $x_1^{2^\ell-1}+x_1p_{\ell,1}+x_1^3p_{\ell,2}$. The coefficient of $m_{i,j,0}$ in this is $(1+j)+(1+\tbinom{j+2}1)=0$, as desired. If $k>0$, the coefficient of $m_{i,j,k}$ is $(1+\tbinom{j+k}{k-1}+\tbinom{j+k+1}{k+1})+(1+\tbinom{j+k+2}{k+1})\equiv\tbinom{j+k}k$, as desired.
\end{proof}
\begin{proof}[Proof of Theorem \ref{thm3}] It suffices to show that
\begin{equation}\label{eq2}\sum_{i=1}^k x_1^{2^{i-1}}g_{2^k-2^{i-1}}=x_1^{2^k},\end{equation}
where $g_m$ is the sum of all monomials in $x_1,\ldots,x_k$ of degree $m$ with all nonzero exponents $2$-powers. (Other rows are handled equivalently.)

The term $x_1^{2^k}$ is obtained once, when $i=k$. The only monomials obtained in the LHS of (\ref{eq2})
have their $x_i$-exponent a $2$-power for $i>1$, while their $x_1$-exponent may be a $2$-power or the sum of two distinct 2-powers. A term of the first type, $x_1^{2^i}x_2^{2^{t_2}}\cdots x_k^{2^{t_k}}$ with $\sum 2^{t_i}>0$, can be obtained from either the $i$th term in (\ref{eq2}) or the $(i+1)$st. So its coefficient is 0 mod 2. A term of the second type, $x_1^{2^a+2^b}x_2^{t_2}\cdots x_k^{t_k}$, can also be obtained in two ways, either from $i=a+1$ or $i=b+1$.\end{proof}

Theorem \ref{thm4} is an immediate consequence of the following proposition, which shows that, in $\F_2[x_1,\ldots,x_k]$,
$$m_{2^1,\ldots,2^{k-1},2^\ell}=m_{2^0,\ldots,2^{k-1}}\cdot\sum m_{\|S_1\|,\ldots,\|S_k\|},$$
with $S_i$ as in Theorem \ref{thm4} or Proposition \ref{biglem}.
\begin{prop}\label{biglem} For $\ell\ge k$, the only $k$-tuples $(n_1,\ldots,n_k)$ that can be decomposed in an odd number of ways as $n_i=s_i+t_i$ with $(t_1,\ldots,t_k)$ a permutation of $(1,2,4,\ldots,2^{k-1})$ and $s_i=\|S_i\|$, where $S_1,\ldots,S_k$ is a partition of $\{1,2,4,\ldots,2^{\ell-1}\}$ into $k$ nonempty subsets, are the permutations of $(2,4,8,\ldots,2^{k-1},2^\ell)$.\end{prop}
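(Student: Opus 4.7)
The plan is to reinterpret the count in the proposition as the coefficient of $\prod_i x_i^{n_i}$ in a certain polynomial, thereby reducing the statement to the algebraic identity $m_{1,2,\ldots,2^{k-1}}\cdot\sum_P m_{\|S_1\|,\ldots,\|S_k\|}=m_{2,4,\ldots,2^{k-1},2^\ell}$ already highlighted in the text, and then to prove that identity via Frobenius and inclusion--exclusion together with a short logarithmic-derivative computation.

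The count of decompositions of $(n_1,\ldots,n_k)$ equals the coefficient of $\prod_i x_i^{n_i}$ in $G(x):=m_{1,2,\ldots,2^{k-1}}(x)\cdot F(x)$, where $F(x):=\sum_P\prod_i x_i^{\|S_i\|}$ sums over ordered partitions $P=(S_1,\ldots,S_k)$ of $\{1,2,4,\ldots,2^{\ell-1}\}$ into $k$ nonempty parts, so the proposition is equivalent to the identity $G=m_{2,4,\ldots,2^{k-1},2^\ell}$ in $\F_2[x]$. Reindexing each ordered partition as a surjection $f\colon\{0,\ldots,\ell-1\}\to\{1,\ldots,k\}$ (with $f(i)=j$ meaning $2^i\in S_j$), one has $F=\sum_f\prod_i x_{f(i)}^{2^i}$; inclusion--exclusion on the image of $f$, with all signs $\equiv +1\pmod 2$, combined with Frobenius gives $F=\sum_S\bigl(\sum_{j\in S}x_j\bigr)^{2^\ell-1}=\sum_{v\in V}v^{2^\ell-1}$, where $V:=\operatorname{span}_{\F_2}(x_1,\ldots,x_k)$. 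Using $m_{1,2,\ldots,2^{k-1}}=\prod_{v\in V^*}v$ from the proof of Theorem~\ref{thm1}, this rewrites $G$ as $\sum_{v\in V^*}v^{2^\ell}\prod_{u\in V^*\setminus\{v\}}u$.

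Next, expand $v=\eps_1 x_1+\cdots+\eps_k x_k$ and apply Frobenius to get $v^{2^\ell}=\sum_i\eps_i x_i^{2^\ell}$: regrouping by which $x_i^{2^\ell}$ is picked up yields $G(x)=\sum_i x_i^{2^\ell}\,\Xi_i$ with $\Xi_i:=\sum_{v\in x_i+V_i}\prod_{u\in V^*\setminus\{v\}}u$ and $V_i:=\operatorname{span}_{\F_2}(x_j:j\ne i)$. On the Moore-determinant side, cofactor expansion of $m_{2,4,\ldots,2^{k-1},2^\ell}$ along its $2^\ell$-column, combined with Frobenius applied to each $(k-1)\times(k-1)$ Moore minor in $\{x_j:j\ne i\}$, yields $m_{2,4,\ldots,2^{k-1},2^\ell}=\sum_i x_i^{2^\ell}M_i^2$ with $M_i:=\prod_{w\in V_i^*}w$. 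So the identity reduces to showing $\Xi_i=M_i^2$ for each $i$.

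This last identity is the main obstacle and the only step where characteristic~$2$ is essential. The plan is a logarithmic-derivative calculation: set $\Pi_i(x_i):=\prod_{w\in V_i}(x_i+w)$, observe the factorization $c_0:=m_{1,2,\ldots,2^{k-1}}=M_i\cdot\Pi_i$, and rewrite $\Xi_i=c_0\sum_{w\in V_i}(x_i+w)^{-1}=c_0\cdot\Pi_i'(x_i)/\Pi_i(x_i)$. Since $V_i$ is an $\F_2$-subspace, $\Pi_i$ is $\F_2$-linearized in $x_i$, so in characteristic~$2$ only the coefficient of $x_i^1$ survives under differentiation; that coefficient is $\prod_{w\in V_i^*}w=M_i$. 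Hence $\Xi_i=(M_i\Pi_i)\cdot M_i/\Pi_i=M_i^2$, completing the proof.
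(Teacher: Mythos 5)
Your proof is correct, but it takes a genuinely different route from the paper's. The paper proves Proposition \ref{biglem} by a purely combinatorial sign-reversing involution: it pairs off the matrices $\bigl(\begin{smallmatrix}S_1&\cdots&S_k\\ t_1&\cdots&t_k\end{smallmatrix}\bigr)$ with equal column sums via a case analysis (tracking the eventual image $T$ of the map $f$ sending $x$ to the $t_i$ below its part, and handling $f|T\ne 1_T$, $f|T=1_T$ with $i<k-1$, and $i=k-1$ separately), leaving only the distinguished matrix unpaired. You instead recast the statement as the polynomial identity $m_{1,2,\ldots,2^{k-1}}\cdot F=m_{2,4,\ldots,2^{k-1},2^\ell}$ (which the paper itself records just before the proposition) and prove it algebraically: inclusion--exclusion plus Frobenius turns the surjection sum into the power sum $F=\sum_{v\in V^*}v^{2^\ell-1}$, the Moore-determinant factorization $m_{1,\ldots,2^{k-1}}=\prod_{v\in V^*}v$ (already invoked in the paper's proof of Theorem \ref{thm1}) and cofactor expansion reduce everything to $\Xi_i=M_i^2$, and that follows because the subspace polynomial $\Pi_i(T)=\prod_{w\in V_i}(T+w)$ is $\F_2$-linearized, so its derivative is the constant $M_i$; each step checks out, including the final identity, which can be read entirely as the polynomial computation $\Xi_i=M_i\sum_{w_0\in V_i}\prod_{w\ne w_0}(x_i+w)=M_i\Pi_i'(x_i)=M_i^2$ with no actual division. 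The trade-off: the paper's involution is elementary and self-contained but demands delicate case-checking whose exhaustiveness the reader must verify; your argument leans on standard facts about Moore determinants and linearized polynomials, is shorter to verify, yields the intermediate formula $p_{\ell,0}=\sum_{v\in V^*}v^{2^\ell-1}$ as a bonus (a clean closed form in the spirit of Section \ref{Dicksec}, making the ${\rm GL}(V)$-invariance of $p_{\ell,0}$ transparent), but requires characteristic $2$ throughout, whereas the involution is a statement about honest integer parities.
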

\begin{proof} We will show that all
\begin{equation}\label{s,t}\begin{pmatrix}S_1&\cdots&S_k\\ t_1&\cdots&t_k\end{pmatrix}\end{equation}
as in the proposition can be grouped into pairs with equal column sums $(\|S_1\|+t_1,\ldots,\|S_k\|+t_k)$ except for permutations (by column) of
\begin{equation}\label{spec}\begin{pmatrix}2^0&2^1&\cdots&2^{k-2}&\{2^{k-1},\ldots,2^{\ell-1}\}\\ 2^0&2^1&\cdots&2^{k-2}&2^{k-1}\end{pmatrix}.\end{equation}
It is easy to see that (\ref{spec}) is the only matrix (\ref{s,t}) with its column sum.

Let $M$ be  a matrix (\ref{s,t}), and let $K=\{2^0,\ldots,2^{k-1}\}$. Define $f:K\to K$ by $f(x)=t_i$ if $x\in S_i$. Since $f^{i+1}K\subseteq f^iK$, there is a smallest nonnegative integer $N$ such that $f^{N+1}K=f^NK$; i.e., with $T:=f^NK$, $f|T$ is an automorphism of $T$.

{\bf Case 1}: $f|T\ne1_T$.  We pair $M$ with the matrix obtained by interchanging $x$ and $f(x)$ in all columns with $t_i\in T$. Note that this preserves column sums and is involutive, in the sense that the new matrix is also of Case 1 type, and would lead to $M$. For example,
$$\begin{pmatrix}2^0&\{2^1,2^3\}&2^2\\ 2^1&2^0&2^2\end{pmatrix}\text{ is paired with }\begin{pmatrix}2^1&\{2^0,2^3\}&2^2\\ 2^0&2^1&2^2\end{pmatrix}.$$

{\bf Case 2}: $f|T=1_T$. Let $2^i\in T$ be minimal such that the $S_j$ above it in (\ref{s,t}) {\it strictly} contains $\{2^i\}$. Such an $i$ must exist since either $T=K$ or else some $\ell\in K-T$ must satisfy $f(\ell)\in T$.

{\bf Case 2a}: $i<k-1$.
 Then
$$\begin{pmatrix}\cdots&\{2^i,D\}&\cdots&E&\cdots\\ \cdots&2^i&\cdots&2^{i+1}&\cdots\end{pmatrix}\text{ is paired with }\begin{pmatrix}\cdots&D&\cdots&\{2^i,E\}&\cdots\\ \cdots&2^{i+1}&\cdots&2^i&\cdots\end{pmatrix}.$$
Here $D$ and $E$ represent nonempty collections of 2-powers.

{\bf Case 2b}: $i=k-1$. Let $S_v$ be the set above $2^{k-1}$ in (\ref{s,t}). If $S_v$ contains $\{2^i:\ k-1\le i\le\ell-1\}$, then the matrix must be of the form (\ref{spec}), since $f$ must be bijective, and hence $T=K$ and $f=1_K$. Otherwise,
let $2^e$ be the smallest 2-power $\ge 2^k$ not in $S_v$.  There is a sequence $2^{i_1},\ldots,2^{i_r}$ such that $2^{i_1}$ lies below $2^e$ in $M$, $f(2^{i_j})=2^{i_{j+1}}$ for $1\le j<r$, and $2^{i_r}\in S_v$. This sequence of $2^{i_j}$'s must eventually be in $S_v$ because otherwise it would have a cycle, and be in Case 1. The matrix $M$ is paired with one in which all the $2^{i_j}$'s are moved up or down within their column, while the $2^j$'s with $k-1\le j\le e$ are interchanged between the columns
containing the $2^e$ and the $2^{k-1}$, with other entries remaining fixed. We illustrate with a case $r=2$, $e=k+2$.
$$\begin{pmatrix}2^{k+2}&\cdots&2^{t_1}&\cdots&\{2^{t_2},2^{k-1},2^k,2^{k+1}\}\\ 2^{t_1}&\cdots&2^{t_2}&\cdots&2^{k-1}\end{pmatrix}\leftrightarrow\begin{pmatrix}\{2^{t_1},2^{k-1},2^k,2^{k+1}\}&
\cdots&2^{t_2}&\cdots&2^{k+2}\\ 2^{k-1}&\cdots&2^{t_1}&\cdots&2^{t_2}\end{pmatrix}.$$
\end{proof}

\section{Relations with Dickson invariants}\label{Dicksec}
In this section, we discuss the relationship between our polynomials $p_{\ell,j}$ and the Dickson invariants. Most of the results in this section were suggested by a referee.

Let $V$ be an $\F_2$-vector space with basis $x_1,\ldots,x_k$, and $S(V)$ its symmetric algebra. The general linear group GL$(V)$ acts on $S(V)$, and the ring of invariant elements is called the 2-primary Dickson algebra $D_k$. Dickson showed in \cite{Dick} that $D_k$ is a polynomial algebra on classes $c_{j}$ of grading $2^k-2^j$ for $0\le j\le k-1$. We suppress the usual $k$ from the subscript, as we did with our $p$'s, since it is fixed throughout this paper.

If $M$ is a Vandermonde determinant in $x_1,\ldots,x_k$ with distinct 2-power exponents, then $M$ is invariant under the action of GL$(V)$. This is easily proved using linearity of determinants and that $(\sum\a_ix_i)^{2^t}=\sum\a_ix_i^{2^t}$. Since our polynomials $p_{\ell,j}$ in (\ref{pdef}) are ratios of Vandermonde determinants with distinct 2-power exponents, they are elements of $D_k$, and one might seek to express them in terms of the generators $c_{j}$.

Our first result is that our polynomials $p_{k,j}$ (i.e., those with $\ell=k$) are exactly the generators $c_{j}$.
\begin{prop} For $0\le j\le k-1$, $p_{k,j}=c_{j}$.\label{kjprop}\end{prop}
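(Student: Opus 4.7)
The plan is to identify $p_{k,j}$ with $c_j$ via the classical characterization of the Dickson invariants as coefficients of the top Dickson polynomial
$$f_V(T) = \prod_{v \in V}(T + v) = T^{2^k} + \sum_{j=0}^{k-1} c_j T^{2^j},$$
which exhibits $c_j$ as the coefficient of $T^{2^j}$ in an additive polynomial whose roots are exactly the elements of $V$.

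First, I would return to the linear system (\ref{eq}) specialized to $\ell = k$. Each row says
$$\sum_{j=0}^{k-1} p_{k,j}\, x_i^{2^j} = x_i^{2^k}, \qquad i = 1,\ldots,k,$$
so each basis vector $x_i$ is a root of the polynomial
$$g(T) = T^{2^k} + \sum_{j=0}^{k-1} p_{k,j}\, T^{2^j} \in \F_2[x_1,\ldots,x_k][T].$$

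Second, I would observe that $g$ is $\F_2$-linear in $T$: every exponent is a $2$-power, so Frobenius gives $g(T_1 + T_2) = g(T_1) + g(T_2)$. Consequently, the zero set of $g$ in $\F_2(x_1,\ldots,x_k)$ is an $\F_2$-linear subspace, so it contains the $\F_2$-span $V = \langle x_1,\ldots,x_k\rangle$. Since $g$ has degree $2^k$ in $T$ and $|V| = 2^k$, this forces
$$g(T) = \prod_{v \in V}(T + v) = f_V(T).$$

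Third, comparing coefficients of $T^{2^j}$ in $g = f_V$ yields $p_{k,j} = c_j$ for $0 \le j \le k-1$, completing the proof. There is essentially no obstacle here: the only subtle point is verifying that $g$ is genuinely additive (which is automatic from Theorem \ref{thm3}, since all monomial exponents of $p_{k,j}$ are $2$-powers is irrelevant here --- what matters is that the exponents of $T$ appearing in $g$ are $2$-powers), and that $x_1,\ldots,x_k$ being roots of an additive polynomial forces all of $V$ to be roots. Both are immediate over $\F_2$.
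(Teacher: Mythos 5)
Your proof is correct, but it takes a different route from the paper's. The paper's proof is a one-line citation: Wilkerson's primer (\cite[Prop 1.3a]{Wilk}) gives $c_j$ directly as the ratio of Vandermonde determinants $m_{2^0,\ldots,\widehat{2^j},\ldots,2^{k-1},2^k}/m_{2^0,\ldots,2^{k-1}}$, which is literally (\ref{pdef}) with $\ell=k$. You instead start from the more primitive characterization of the Dickson invariants as the coefficients of $f_V(T)=\prod_{v\in V}(T+v)=T^{2^k}+\sum_j c_jT^{2^j}$, and observe that the $\ell=k$ instance of the system (\ref{eq}) says precisely that each $x_i$ is a root of the monic additive polynomial $g(T)=T^{2^k}+\sum_j p_{k,j}T^{2^j}$; additivity forces all $2^k$ distinct elements of $V$ to be roots, so $g=f_V$ by degree count, and comparing coefficients gives $p_{k,j}=c_j$. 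This is essentially a re-derivation of Wilkerson's determinant formula (his proof of Prop 1.3 runs along the same lines), so the two arguments rest on the same classical circle of ideas; what yours buys is self-containedness and an explanation of \emph{why} $\ell=k$ is the special case where the $p_{\ell,j}$ are exactly the Dickson generators, at the cost of being longer than the citation. Two small points you should make explicit: the elements of $V$ are pairwise distinct because $x_1,\ldots,x_k$ are algebraically independent (so $f_V$ really has $2^k$ distinct roots and the divisibility-plus-degree argument applies), and the $p_{k,j}$ you feed into $g$ are the ones from (\ref{pdef}), which satisfy (\ref{eq}) by Cramer's rule as in the proof of Theorem \ref{thm1}. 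Also, your closing parenthetical about Theorem \ref{thm3} is muddled as written --- Theorem \ref{thm3} is not needed anywhere in the argument; only the shape of the exponents of $T$ in $g$ matters, as you correctly say in the same breath.
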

\begin{proof} By \cite[Prop 1.3a]{Wilk}, $c_{j}=\dfrac{m_{2^0,\ldots,\widehat{2^j},\ldots,2^{k-1},2^k}}{m_{2^0,\ldots,2^{k-1}}}$, which by (\ref{pdef}) equals $p_{k,j}$.\end{proof}

The following corollary is now immediate from Theorem \ref{thm3}.
\begin{cor} \label{Dickcor} The Dickson invariant usually called $c_{k,j}$ over $\F_2$ is the sum of all monomials of degree $2^k-2^j$ in $x_1,\ldots,x_k$ in which all nonzero exponents are  $2$-powers.
\end{cor}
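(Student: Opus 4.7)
The plan is simply to chain Proposition~\ref{kjprop} with Theorem~\ref{thm3}. Proposition~\ref{kjprop} has already identified the Dickson generator (written $c_{j}$ under this paper's convention of suppressing the fixed $k$, but called $c_{k,j}$ in the more standard notation used in the corollary statement) with the polynomial $p_{k,j}$, namely the $\ell=k$ specialization of the quotient of Vandermonde determinants~(\ref{pdef}). Theorem~\ref{thm3} in turn supplies the explicit monomial expansion of $p_{k,j}$: it is precisely the sum of all monomials in $x_1,\ldots,x_k$ of total degree $2^k-2^j$ whose nonzero exponents are all $2$-powers.

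Concatenating these two identifications yields the claimed description of $c_{k,j}$. The only points worth checking are notational: that the phrase ``nonzero exponents are $2$-powers'' in Theorem~\ref{thm3} permits some variables to be absent (exponent zero), and that the total degree $2^k-2^j$ agrees with the grading of the Dickson generator in the standard presentation. Both checks are immediate from the definitions, so no real obstacle remains; the proof collapses to a one-line substitution of Theorem~\ref{thm3} into Proposition~\ref{kjprop}.
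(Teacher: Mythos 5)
Your proposal is correct and matches the paper's own argument exactly: the paper derives the corollary immediately by combining Proposition \ref{kjprop} ($p_{k,j}=c_j$) with the monomial expansion of $p_{k,j}$ given in Theorem \ref{thm3}. Nothing further is needed.
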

\ni This result was certainly  known to some, but we could not find it explicitly stated in the literature. One place that essentially says it is \cite[Prop 3.6(c)]{Ar}.

Some of our elements $p_{\ell,j}$ are related to one another in the following way.
\begin{prop} For $\ell\ge k+1$, we have $p_{\ell,0}=c_{0}p_{\ell-1,k-1}^2$. In particular, $p_{k+1,0}=c_{0}c_{k-1}^2$.\end{prop}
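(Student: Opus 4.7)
The plan is to reduce everything to Vandermonde determinants and exploit two observations: (i) by (\ref{pdef}), each $p_{\ell,j}$ and each generator $c_j$ is already exhibited as a Vandermonde ratio with distinct $2$-power exponents (via the Vandermonde interpretation of $m_{\cdot}$ stated right after (\ref{pdef})), and (ii) in characteristic $2$ the Frobenius is a ring homomorphism, so for any Vandermonde $V(a_1,\ldots,a_k):=\det(x_i^{a_j})$ we have $V(a_1,\ldots,a_k)^2=V(2a_1,\ldots,2a_k)$ (cross terms vanish because of the squaring).

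First I would write, with $V_0:=V(2^0,2^1,\ldots,2^{k-1})$,
\[
p_{\ell,0}=\frac{V(2^1,\ldots,2^{k-1},2^\ell)}{V_0},\qquad p_{\ell-1,k-1}=\frac{V(2^0,2^1,\ldots,2^{k-2},2^{\ell-1})}{V_0},\qquad c_0=p_{k,0}=\frac{V(2^1,\ldots,2^{k-1},2^k)}{V_0},
\]
where the last equality uses Proposition \ref{kjprop}. Then, squaring the middle expression and applying Frobenius to numerator and denominator separately yields
\[
p_{\ell-1,k-1}^{\,2}=\frac{V(2^1,2^2,\ldots,2^{k-1},2^\ell)}{V(2^1,2^2,\ldots,2^k)}.
\]

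Multiplying by $c_0$ and observing that the denominator of $p_{\ell-1,k-1}^{\,2}$ is literally the numerator of $c_0$ (reordered), the cancellation gives precisely $p_{\ell,0}$. Degrees agree: $(2^k-1)+2(2^{\ell-1}-2^{k-1})=2^\ell-1$, which is the degree of $p_{\ell,0}$.

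For the \emph{in particular} clause, applying Proposition \ref{kjprop} again, this time with $j=k-1$ and $\ell=k$, gives $p_{k,k-1}=c_{k-1}$; substituting this into the general formula at $\ell=k+1$ yields $p_{k+1,0}=c_0\,c_{k-1}^{\,2}$. There is no real obstacle here: the only thing to check carefully is the Frobenius identity for Vandermonde determinants over $\F_2$, which is immediate, and then the identification of numerators and denominators after squaring.
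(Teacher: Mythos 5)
Your proof is correct and follows essentially the same route as the paper: both arguments come down to the Frobenius identity $m_{2^0,\ldots,2^{k-2},2^{\ell-1}}^2=m_{2^1,\ldots,2^{k-1},2^\ell}$ for the Vandermonde/monomial symmetric polynomials with $2$-power exponents, followed by cancellation against the denominator $m_{2^0,\ldots,2^{k-1}}=c_0$. The paper phrases it by multiplying through by $c_0$ and cancelling in the domain $\F_2[x_1,\ldots,x_k]$, while you keep $c_0$ as a Vandermonde ratio via Proposition \ref{kjprop}; the computations are equivalent.
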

\begin{proof} The denominator in (\ref{pdef}) equals $c_{0}$, so we have
$$c_{0}p_{\ell,0}=m_{2^1,\ldots,2^{k-1},2^\ell}=m^2_{2^0,\ldots,2^{k-2},2^{\ell-1}}=c_{0}^2p_{\ell-1,k-1}^2.$$
The second part follows from Proposition \ref{kjprop}.\end{proof}

There is an action of the mod-2 Steenrod algebra on $S(V)$ and on $D_k$, and the following complete formula was obtained in \cite{Hung}.
\begin{prop} \label{hprop} $($\cite{Hung}$)$ In the Dickson algebra $D_k$, for $0\le s\le k-1$,
$$\sq^ic_s=\begin{cases}c_r&i=2^s-2^r\\ c_rc_t&i=2^k-2^t+2^s-2^r,\ r\le s<t\\ c_s^2&i=2^k-2^s\\ 0&\text{otherwise.}\end{cases}$$
\end{prop}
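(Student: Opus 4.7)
The proposition is attributed to Hung, so the paper most likely ``proves'' it by citing \cite{Hung}. Independently, here is how I would derive it. The plan is to introduce an auxiliary degree-$1$ indeterminate $T$ and use the classical factorization
$$f(T) := \prod_{v \in V}(T+v) = \sum_{j=0}^{k} c_j T^{2^j},$$
with the convention $c_k = 1$, which realizes the Dickson generators as coefficients. Since the total Steenrod square $\sq = 1 + \sq^1 + \sq^2 + \cdots$ is a ring homomorphism and $\sq(T+v) = (T+v) + (T+v)^2 = (T+v)(1+T+v)$ on each linear factor, applying $\sq$ to the product gives $\sq(f(T)) = f(T)\, f(1+T)$. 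The identity $(1+T)^{2^j} = 1 + T^{2^j}$ over $\F_2$ then yields $f(1+T) = \sigma + f(T)$, where $\sigma := 1 + \sum_{s=0}^{k-1} c_s$, so
$$\sq(f(T)) = \sigma f(T) + f(T)^2.$$

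The next step is to compare this with the direct expansion $\sq(f(T)) = \sum_{j=0}^k \sq(c_j)(T^{2^j} + T^{2^{j+1}})$ by matching coefficients of $T^{2^i}$ for each $i$. This produces the telescoping recursion
$$\sq(c_0) = \sigma c_0, \qquad \sq(c_i) + \sq(c_{i-1}) = \sigma c_i + c_{i-1}^2 \quad (1 \le i \le k-1),$$
whose closed-form solution is
$$\sq(c_s) = \sigma \sum_{j=0}^{s} c_j + \sum_{j=0}^{s-1} c_j^2.$$
Exploiting $(\sum_{j \le s} c_j)^2 = \sum_{j \le s} c_j^2$ over $\F_2$, this collapses to
$$\sq(c_s) = \sum_{j=0}^{s} c_j + c_s^2 + \sum_{\substack{0 \le r \le s \\ s < t \le k-1}} c_r c_t.$$

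Finally I would read off each $\sq^i(c_s)$ purely by grading. Since $|c_j| = 2^k - 2^j$, the summand $c_r$ contributes at $i = 2^s - 2^r$, the summand $c_s^2$ at $i = 2^k - 2^s$, and each $c_r c_t$ with $r \le s < t$ at $i = 2^k - 2^t + 2^s - 2^r$, matching exactly the four cases of Hung's formula (with all other $\sq^i(c_s)$ vanishing because no other monomial in the $c_j$'s carries the requisite grading). The main obstacle is the bookkeeping in the collapse step: cancelling the diagonal $c_j^2$ pieces of $\sigma \sum_{j \le s} c_j$ against $\sum_{j \le s-1} c_j^2$ mod $2$, paying careful attention to the $c_k = 1$ convention when extracting the $T^{2^k}$ coefficient, and checking that symmetric cross-products $c_a c_b$ with both $a, b \le s$ indeed appear an even number of times.
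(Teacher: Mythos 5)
The paper does not prove this proposition at all: it is quoted verbatim from \cite{Hung}, exactly as you anticipated. Your self-contained derivation is correct, and it is in fact the classical mechanism behind such formulas: the identity $\sq(f(T))=f(T)\,f(1+T)$ for $f(T)=\prod_{v\in V}(T+v)=\sum_{j=0}^k c_jT^{2^j}$ is the standard generating-function trick (it appears in Wilkerson's primer \cite{Wilk} and underlies Hu'ng's own computation), so your route coincides with the literature's rather than with anything in this paper. The two places you flag as needing bookkeeping do check out. First, in the collapse of $\sigma\sum_{j\le s}c_j+\sum_{j\le s-1}c_j^2$ with $\sigma=1+\sum_{t\le k-1}c_t$, the portion of the double sum with both indices $\le s$ equals $\bigl(\sum_{j\le s}c_j\bigr)^2=\sum_{j\le s}c_j^2$ mod $2$, and combining with $\sum_{j\le s-1}c_j^2$ leaves exactly $c_s^2$; the portion with $t>s$ is the cross-term sum, and the boundary coefficient $c_k=1$ only enters through the $T^{2^k}$ and $T^{2^{k+1}}$ equations, which are consistent and do not disturb the recursion for $s\le k-1$. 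Second, reading off $\sq^ic_s$ from the total square by grading requires that no two monomials occurring in $\sq(c_s)$ share a degree; this holds because an equality $2^t+2^r=2^{t'}+2^{r'}$ with $r<t$, $r'<t'$ forces $\{r,t\}=\{r',t'\}$ by uniqueness of binary expansions, and the analogous comparisons between the $c_r$, $c_rc_t$, and $c_s^2$ families all fail for $s\le k-1$. So each degree picks out at most one monomial and the four-case formula follows.
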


 Without using that formula, we can easily obtain the following result.
\begin{prop} For $0\le j\le k-1$,
$$\sq^{2^i}p_{\ell,j}=\begin{cases}p_{\ell,j-1}&i=j-1\\ 0&i\ne j-1,\ i<k-1.\end{cases}$$\label{prop1}
\end{prop}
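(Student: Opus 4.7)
The plan is to rewrite $p_{\ell,j}$ via (\ref{pdef}) as the quotient of two Vandermonde determinants and apply $\sq^{2^i}$ to the identity $c_0\cdot p_{\ell,j}=N_{\ell,j}$, where I set $N_{\ell,j}:=m_{2^0,\ldots,\widehat{2^j},\ldots,2^{k-1},2^\ell}$. Both $c_0$ and $N_{\ell,j}$ are, over $\F_2$, Vandermonde determinants whose column exponents form a set of distinct $2$-powers. Since Theorem \ref{thm1} already guarantees that $p_{\ell,j}$ is a polynomial, the strategy is to read off $\sq^{2^i}(p_{\ell,j})$ through the Cartan formula.

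The key technical input will be a small general lemma: for any Vandermonde $V$ in $x_1,\ldots,x_k$ whose column exponents form a set $E$ of distinct $2$-powers, one has $\sq^{2^i}(V)=0$ unless $2^i\in E$, and when $2^i\in E$ the result is the Vandermonde obtained from $V$ by replacing the column exponent $2^i$ by $2^{i+1}$. This is immediate from $\sq^r(x^{2^t})=\binom{2^t}{r}x^{2^t+r}$, which is nonzero only for $r\in\{0,2^t\}$, combined with the arithmetic observation that a sum of distinct $2$-powers equals $2^i$ only if it is the single summand $2^i$; Cartan then forces exactly one column to have its exponent doubled. The new Vandermonde vanishes whenever the new exponent set has a repetition.

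Applied to $c_0$ with $E=\{2^0,\ldots,2^{k-1}\}$, the replacement $2^i\mapsto 2^{i+1}$ creates a duplicate whenever $i<k-1$, so $\sq^{2^i}(c_0)=0$ in that range. More generally, Hung's formula (Proposition \ref{hprop}) gives $\sq^s(c_0)=0$ for $0<s<2^{k-1}$. Since $i<k-1$ yields $2^i\le 2^{k-2}<2^{k-1}$, the Cartan expansion of $\sq^{2^i}(c_0\cdot p_{\ell,j})$ collapses to
\[
c_0\cdot \sq^{2^i}(p_{\ell,j}) \;=\; \sq^{2^i}(N_{\ell,j}),
\]
so it remains to compute the right-hand side.

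Applying the lemma to $N_{\ell,j}$, whose exponent set is $\{2^0,\ldots,\widehat{2^j},\ldots,2^{k-1},2^\ell\}$, splits into three cases for $i<k-1$. If $i=j$, then $2^i\notin E$ and $\sq^{2^i}(N_{\ell,j})=0$. If $i=j-1$, then $2^j\notin E$, so the replacement $2^{j-1}\mapsto 2^j$ is legal and produces precisely $N_{\ell,j-1}$; dividing by $c_0$ yields $p_{\ell,j-1}$. In every remaining case, $i+1\le k-1$ and $i+1\ne j$ force $2^{i+1}\in E$, so the replacement creates a duplicate column and $\sq^{2^i}(N_{\ell,j})=0$. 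The delicate point in the argument is the collapse of the Cartan sum, which rests entirely on the vanishing of $\sq^s(c_0)$ for $0<s\le 2^i$; once that is granted, the remainder is purely bookkeeping on which $2$-powers belong to $E$.
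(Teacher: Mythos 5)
Your proof is correct and follows essentially the same route as the paper: both apply the Cartan formula to the identity $c_0\cdot p_{\ell,j}=m_{2^0,\ldots,\widehat{2^j},\ldots,2^{k-1},2^\ell}$, collapse the sum using the vanishing of $\sq^s(c_0)$ for $0<s<2^{k-1}$, and then compute $\sq^{2^i}$ of the numerator by tracking which $2$-power exponent gets doubled. The only cosmetic differences are that you package the numerator computation as a general lemma on Vandermondes with distinct $2$-power exponents and cite Proposition \ref{hprop} for the vanishing on $c_0$, where the paper uses a direct pairing argument plus the Adem relations.
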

\begin{proof} For $i<k-1$, $\sq^{2^i}(m_{2^0,\ldots,2^{k-1}})=0$ since each term with factor $\sq^{2^i}(x_s^{2^i})x_t^{2^{i+1}}$ is paired with an equal term $x_s^{2^{i+1}}\sq^{2^i}(x_t^{2^i})$. Using the Adem relations, it follows that $\sq^n(m_{2^0,\ldots,2^{k-1}})=0$ for $0<n<2^{k-1}$. Similarly, for $0<i<k-1$,
$$\sq^{2^i}(m_{2^0,\ldots,\widehat{2^j},\ldots,2^{k-1},2^\ell})=\begin{cases}m_{2^0,\ldots,\widehat{2^{j-1}},\ldots,2^{k-1},2^\ell}&i=j-1\\ 0&\text{otherwise.}\end{cases}$$
The result follows from applying the Cartan formula to
$$m_{2^0,\ldots,2^{k-1}}p_{\ell,j}=m_{2^0,\ldots,\widehat{2^j},\ldots,2^{k-1},2^\ell}.$$
\end{proof}

This result meshes nicely with the following one.
\begin{prop} For $\ell\ge k$,
$$p_{\ell+1,k-1}=\sum_j c_{j}\sq^{2^\ell-2^k+2^j}p_{\ell,k-1}.$$\label{prop2}\end{prop}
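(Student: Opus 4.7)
The plan is to combine a recursion for the $p_{\ell,j}$'s, obtained by squaring the key relation $\sum_{j=0}^{k-1} p_{\ell,j}\, x_i^{2^j} = x_i^{2^\ell}$ (equivalent to (\ref{eq})), with a Steenrod-square analogue of the same relation. Both rely on the additive-polynomial observation used at the end of the proof of Theorem \ref{thm1}: any relation of the form $\sum_{j=0}^{k-1} A_j(x)\, x_i^{2^j} = 0$ holding for each $i$, with $A_j\in S(V)$, forces $A_j = 0$, because $f(T) := \sum_j A_j T^{2^j}$ is an additive polynomial of degree at most $2^{k-1}$ vanishing on all of $V$, hence divisible in $S(V)[T]$ by the monic polynomial $\prod_{v\in V}(T-v)$ of degree $2^k$.

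Squaring the main relation, re-expanding the resulting $x_i^{2^k}$ via the Dickson identity $x_i^{2^k} = \sum_m c_m x_i^{2^m}$, and matching coefficients of $x_i^{2^j}$ against $x_i^{2^{\ell+1}} = \sum_j p_{\ell+1,j}\, x_i^{2^j}$ by the above argument yields the ``squaring recursion''
$$p_{\ell+1,k-1}\;=\;p_{\ell,k-2}^2 \;+\; c_{k-1}\, p_{\ell,k-1}^2.$$

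Next, for $0 < N < 2^\ell$ the square $\sq^N$ annihilates $x_i^{2^\ell}$ since $\binom{2^\ell}{N}$ is even. Applying $\sq^N$ to the main relation, expanding via the Cartan formula (noting that $\sq^b x_i^{2^j} \ne 0$ only for $b \in \{0, 2^j\}$), and re-expanding via Dickson the $x_i^{2^k}$ generated by the $j = k-1$ term, produces a relation of the shape $\sum_j B_j(x)\, x_i^{2^j} = 0$. The additive-polynomial argument then delivers the intermediate recursion
$$\sq^N p_{\ell,j} \;=\; [\,j \ge 1\,]\, \sq^{N - 2^{j-1}} p_{\ell,j-1}\;+\; c_j\, \sq^{N - 2^{k-1}} p_{\ell,k-1}$$
valid for $0 < N < 2^\ell$ and $0 \le j \le k-1$.

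Finally I telescope. Setting $M_j := 2^\ell - 2^{k-1} + 2^j$, the identities $M_j - 2^{j-1} = M_{j-1}$ and $M_j - 2^{k-1} = 2^\ell - 2^k + 2^j$ allow me to unwind $\sq^{M_{k-2}} p_{\ell,k-2}$ down through $p_{\ell,k-3}, \ldots, p_{\ell,0}$, each step contributing exactly $c_j\, \sq^{2^\ell - 2^k + 2^j} p_{\ell,k-1}$ for $j = 0, \ldots, k-2$. Since $M_{k-2} = 2^\ell - 2^{k-2} = |p_{\ell,k-2}|$, instability rewrites the left side as $p_{\ell,k-2}^2$; and instability also gives $c_{k-1}\, p_{\ell,k-1}^2 = c_{k-1}\, \sq^{2^\ell - 2^k + 2^{k-1}} p_{\ell,k-1}$, supplying the missing $j = k-1$ term. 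Substituting into the squaring recursion completes the proof. The main obstacle is the bookkeeping: one must check that every intermediate exponent $M_j$ lies in the range $(0, 2^\ell)$ where the intermediate recursion is valid, which is precisely where the hypothesis $\ell \ge k$ enters.
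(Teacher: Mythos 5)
Your proof is correct, but it takes a genuinely different route from the paper's. The paper observes that $c_0p_{\ell,k-1}$ equals the Vandermonde determinant $m_{2^0,\ldots,2^{k-2},2^\ell}$, notes that $\sq^{2^\ell}$ applied to it simply squares the top exponent (giving $c_0p_{\ell+1,k-1}$), and then expands the same $\sq^{2^\ell}$ by the Cartan formula using Hu'ng's formula (Proposition \ref{hprop}) for $\sq^i c_0$, finally cancelling $c_0$ in the polynomial ring. You never touch the Vandermonde representation or Hu'ng's theorem: you work entirely from the defining system $\sum_j p_{\ell,j}x_i^{2^j}=x_i^{2^\ell}$, extracting first the Frobenius recursion $p_{\ell+1,k-1}=p_{\ell,k-2}^2+c_{k-1}p_{\ell,k-1}^2$ and then a general recursion for $\sq^N p_{\ell,j}$ valid for $0<N<2^\ell$, and telescoping the latter along $M_j=2^\ell-2^{k-1}+2^j$. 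I checked the details: the uniqueness step via additive polynomials is a legitimate substitute for the paper's Cramer's-rule/nonvanishing-Vandermonde argument; the Dickson relation $x_i^{2^k}=\sum_m c_m x_i^{2^m}$ you invoke is exactly the $\ell=k$ case of the defining system together with Proposition \ref{kjprop}; and the range verifications ($0<M_j<2^\ell$ and $2^\ell-2^k+2^j\ge0$, both needing $\ell\ge k$) are where the hypothesis enters, as you say. What your route buys is independence from the external input \cite{Hung} and, as a by-product, an intermediate $\sq^N$-recursion in the spirit of Proposition \ref{prop1}; what it costs is length and bookkeeping compared with the paper's three-line computation.
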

\begin{proof} We have
\begin{equation}\label{ceq}\sq^{2^\ell}(c_{0}p_{\ell,k-1})=\sq^{2^\ell}(m_{2^0,\ldots,2^{k-2},2^\ell})=m_{2^0,\ldots,2^{k-2},2^{\ell+1}}=c_{0}p_{\ell+1,k-1}.\end{equation}
As a special case of Proposition \ref{hprop}, we have, for $i>0$,
$$\sq^ic_{0}=\begin{cases}c_{j}c_{0}&i=2^k-2^j\\ 0&\text{otherwise.}\end{cases}$$
Applying the Cartan formula to the LHS of (\ref{ceq}) and cancelling $c_{0}$ yields the result.\end{proof}

In principle, iterating Propositions \ref{prop1} and  \ref{prop2} enables us to obtain complete formulas expressing our polynomials $p_{\ell,j}$ in terms of the $c_i$'s. For $\ell=k$, this was initiated in our Proposition \ref{kjprop}. Here we do it for $\ell=k+1$ and $k+2$. For $\ell\ge k+3$, the formulas become unwieldy.
\begin{thm}\label{k+12} For $0\le j\le k-1$,
\begin{itemize}
\item[a.] $p_{k+1,j}=c_{j-1}^2+c_jc_{k-1}^2$;
\item[b.] $p_{k+2,j}=c_jc_{k-2}^4+c_jc_{k-1}^6+c_{j-1}^2c_{k-1}^4+c_{j-2}^4$.
\end{itemize}
\end{thm}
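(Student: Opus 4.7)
The plan is to prove both parts by the iteration scheme suggested in the paper: first use Proposition \ref{prop2} to compute $p_{\ell, k-1}$ as a polynomial in the $c_i$'s (with $\ell = k+1$ for part (a) and $\ell = k+2$ for part (b)), and then use Proposition \ref{prop1} as a downward recursion $p_{\ell, j-1} = \sq^{2^{j-1}} p_{\ell, j}$ to propagate the formula to all $j$. The base case for the entire iteration is Proposition \ref{kjprop}, which says $p_{k,j} = c_j$.

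For part (a), applying Proposition \ref{prop2} with $\ell = k$ gives $p_{k+1, k-1} = \sum_{j=0}^{k-1} c_j\, \sq^{2^j} c_{k-1}$. By Hung's formula (Proposition \ref{hprop}) at $s = k-1$, the only nonzero $\sq^{2^j} c_{k-1}$ in this sum are $\sq^{2^{k-2}} c_{k-1} = c_{k-2}$ and $\sq^{2^{k-1}} c_{k-1} = c_{k-1}^2$, so $p_{k+1, k-1} = c_{k-2}^2 + c_{k-1}^3$, which matches (a) at $j = k-1$. For the downward step, I assume (a) holds for some $j$ with $1 \le j \le k-1$ and apply $\sq^{2^{j-1}}$ to $c_{j-1}^2 + c_j c_{k-1}^2$. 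The first summand becomes $(\sq^{2^{j-2}} c_{j-1})^2 = c_{j-2}^2$ by Hung; the second, via Cartan, collapses to the single term $c_{j-1} c_{k-1}^2$, because Hung allows only $i = 0$ and $i = 2^{j-1}$ for $\sq^i c_j$ in the range $0 \le i \le 2^{j-1}$, and the $i = 0$ contribution contains the factor $(\sq^{2^{j-2}} c_{k-1})^2$, which vanishes since $2^{j-2}$ is not of the form $2^{k-1}-2^r$ for $j \le k-1$.

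Part (b) follows the same two-step recipe. The base-case computation of $p_{k+2, k-1}$ uses Proposition \ref{prop2} with $\ell = k+1$ applied to the formula for $p_{k+1, k-1}$ obtained in part (a), so one expands $\sum_j c_j \sq^{2^k + 2^j}(c_{k-2}^2 + c_{k-1}^3)$ and reduces via Cartan and Hung to arrive at $c_{k-1}c_{k-2}^4 + c_{k-1}^7 + c_{k-2}^2 c_{k-1}^4 + c_{k-3}^4$. Each subsequent downward step $p_{k+2, j-1} = \sq^{2^{j-1}} p_{k+2, j}$ is then handled analogously, using the identities $\sq^{2n}(x^2) = (\sq^n x)^2$ and $\sq^{4n}(x^4) = (\sq^n x)^4$ (together with their odd-$n$ vanishing counterparts) to process the four summands $c_j c_{k-2}^4$, $c_j c_{k-1}^6$, $c_{j-1}^2 c_{k-1}^4$, and $c_{j-2}^4$ of the inductive hypothesis, verifying term by term that each transforms into the corresponding summand with $j$ replaced by $j-1$.

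The main obstacle is the base-case computation of $p_{k+2, k-1}$: the double Cartan expansion of $\sq^{2^k + 2^j}(c_{k-1}^3) = \sq^{2^k + 2^j}(c_{k-1}\cdot c_{k-1}^2)$ produces many candidate summands, and identifying precisely which survive (after Hung's drastic vanishing clause) to yield the tidy four-term answer requires careful case analysis. By contrast, the inductive steps are uniform and mechanical; the substantive work is the mod-$2$ bookkeeping in the base case.
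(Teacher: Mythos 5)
Your proposal is correct and follows essentially the same route as the paper's own proof: the base case $p_{k,j}=c_j$ from Proposition \ref{kjprop}, the computation of $p_{\ell,k-1}$ for $\ell=k+1,k+2$ via Proposition \ref{prop2} together with Hu'ng's formula, and the downward induction $p_{\ell,j-1}=\sq^{2^{j-1}}p_{\ell,j}$ from Proposition \ref{prop1} with Cartan-formula bookkeeping. The intermediate values you report (e.g.\ $p_{k+1,k-1}=c_{k-2}^2+c_{k-1}^3$ and the four-term expression for $p_{k+2,k-1}$) agree with the paper's.
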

\begin{proof}
(a). By Propositions \ref{kjprop}, \ref{prop2}, and \ref{hprop}, we have
\begin{eqnarray*}p_{k+1,k-1}&=&\sum_j c_j\sq^{2^j}p_{k,k-1}=\sum_j c_j\sq^{2^j}c_{k-1}\\
&=&c_{k-2}^2+c_{k-1}\cdot c_{k-1}^2.\end{eqnarray*}
Assume the result true for $j$. By Propositions \ref{prop1} and \ref{hprop},
\begin{eqnarray*}p_{k+1,j-1}&=&\sq^{2^{j-1}}(c_{j-1}^2+c_jc_{k-1}^2)\\
&=&(\sq^{2^{j-2}}c_{j-1})^2+\sum_m(\sq^{2^{j-1}-2m}c_j)(\sq^mc_{k-1})^2\\
&=&c_{j-2}^2+(\sq^{2^{j-1}}c_j)c_{k-1}^2\\
&=&c_{j-2}^2+c_{j-1}c_{k-1}^2,\end{eqnarray*}
extending the induction.

(b). Applying Proposition \ref{prop2} to part (a),  we obtain
\begin{eqnarray*} p_{k+2,k-1}&=&\sum_jc_j\sq^{2^k+2^j}(c_{k-2}^2+c_{k-1}^3)\\
&=&\sum_jc_j(\sq^{2^{k-1}+2^{j-1}}c_{k-2})^2+\sum_{j,m}c_j(\sq^{2^k+2^j-2m}c_{k-1})(\sq^mc_{k-1})^2.\end{eqnarray*}
Using Proposition \ref{hprop}, the first sum equals $c_{k-2}c_{k-3}^2c_{k-1}^2+c_{k-1}c_{k-2}^4$, while the second equals
$$c_{k-3}^4+c_{k-2}c_{k-1}^2c_{k-3}^2+c_{k-2}^2c_{k-1}^4+c_{k-1}^7.$$
Combining these yields the result for $j=k-1$. The result for arbitrary $j$ follows by decreasing induction on $j$, similarly to part (a).
\end{proof}

 \def\line{\rule{.6in}{.6pt}}

\end{document}